\documentclass[12pt,oneside,reqno]{amsart}
\usepackage[latin1]{inputenc}
\usepackage{epsfig}
\usepackage{color}
\usepackage{amsthm}
\usepackage{amsmath}
\usepackage{amsfonts}
\usepackage{amssymb}
\usepackage{cancel}
\usepackage{nicefrac}
\usepackage{hyperref}
\usepackage{graphicx}
\usepackage{epsfig}
\usepackage{floatrow}
\usepackage{lmodern}
\usepackage{ae}

\parskip 1.2ex plus 0.5ex minus 0.5ex
\newtheorem{corollary}{Corollary}[section]

\newtheorem{lem}[corollary]{Lemma}
\newtheorem{prp}[corollary]{Proposition}

\newtheorem{thm}[corollary]{Theorem}

\newfont{\sBlackboard}{msbm10 scaled 900}

\newcommand{\mylabel}[1]{\label{#1}
            \ifx\undefined\stillediting
            \else \fbox{$#1$}\fi }
\newcommand{\BE}{\begin{equation}}

\newcommand{\EEQ}{\end{equation}}
\newcommand{\rfb}[1]{\mbox{\rm
   \eqref{#1}}\ifx\undefined\stillediting\else:\fbox{$#1$}\fi}

\newfont{\Blackboard}{msbm10 scaled 1200}

\newfont{\roma}{cmr10 scaled 1200}

\def\n{|\kern -.05cm{|}\kern -.05cm{|}}

\def\N{\rm I\hskip -2pt N} 
\def\C{{\bf \hbox{\sc I\hskip -7pt C}}} 

%

%

%
\newcommand{\mm}    {{\hbox{\hskip 0.5pt}}}

\newcommand{\bluff} {{\hbox{\raise 15pt \hbox{\mm}}}}
%

%

%
 
scaled\magstep2

%

\newcommand{\snorm}[1]{\lVert {#1} \rVert}
%


%
%
\makeatletter
\def\section{\@startsection {section}{1}{\z@}{-3.5ex plus -1ex minus
    -.2ex}{2.3ex plus .2ex}{\large\bf}}
\makeatother
%
\def\be{\begin{equation}}
\def\ee{\end{equation}}

\begin{document}
\thispagestyle{empty}
\title[Degenerate wave equation]{A frequency approach for stabilization of one-dimensional degenerate wave equation}

\author{Akram Ben Aissa}
\address{UR Analysis  and Control  of PDE's, UR 13ES64,
Department of Mathematics, Faculty of Sciences of Monastir, University of Monastir, 5019 Monastir, Tunisia}
\email{akram.benaissa@fsm.rnu.tn}
\author{Mohamed Ferhat}
\address{Departement of Mathematics, Usto  University, Oran 31000,  Algeria}
\email{ferhat22@hotmail.fr}
\author{Ali Segher Kadai}
\address{Laboratory of Mathematics,
Djillali Liabes University,
P. O. Box 89, Sidi Bel Abbes 22000, Algeria}
\email{kadai.alisegher@gmail.com}
\date{}
\begin{abstract}In this paper, we are concerned with the study of stabilization problem for the following strongly degenerate wave equation in one space dimension  
$$w_{tt}(x,t)-\left(x^\alpha w_x(x,t)\right)_x=0$$ where ${\bf\alpha\in [1,2)}$. Thus, using a frequency domain method inspired from  \cite{BT}, we prove the polynomial decays of its total energy with $t^{-\nicefrac{1}{2}}$ decay rate. 
\end{abstract}

\subjclass[2010]{93B07, 93B05, 93C20, 35A15}
\keywords{Degenerate wave equation, Transfer fonction,  frequency approach, polynomial decay.}

\maketitle
 
\tableofcontents

\vfill\break
 
\section{Introduction}
Control problems for degenerate PDE's (and especially for parabolic equations) have received a lot of a attention in the last few years, (see for instance \cite{CMV, CMV1, CMV2}).
So under Carleman estimates with suitable weighted functions, they obtained some observability inequality for the corresponding dual problems.
Therefore, the purpose of this paper is to study stabilization issue for the following degenerate wave equation with $\alpha\in[1,2)$

\begin{equation}\label{s13}
\left\{
\begin{array}{lll}
\begin{split} 
w_{tt}(x,t)-\left(x^\alpha w_x(x,t)\right)_x=0\quad &\text{on}\;(0,1)\times (0,\infty)\\
\left(x^\alpha w_x\right)(0,t)=w_{t}(0,t),\;w(1,t)=0\quad &t\in (0,\infty),\\
w(x,0)=w^0(x),\;w_t(x,0)=w^1(x)\quad &\text{on}\;(0,1).\\
\end{split}
\end{array}
\right.
\end{equation}
So, in a recent paper of Alabau-Cannarsa and Leugering \cite{ACL}, authors studied the same problem as (\ref{s13}) and they proved that exact observability inequality fails for $\alpha\in [1,2)$ via the normal derivative $u_x(1,.)$ and therefore they didn't obtained a such exponentially decay for solutions of (\ref{s13}). More concisely, they studied the following degenerate wave equation
\begin{equation}\label{ver1}
w_{tt}(x,t)-\left(a(x) w_x(x,t)\right)_x=0\quad \text{on}\;(0,1)\times (0,\infty)
\end{equation}
where $a$ is positive function on $]0, 1]$ and vanishes at zero. So under the following linear feedback
\begin{equation}\label{ver2}
w_x(t,1)=-\beta w(t,1)-w_t(t,1),
\end{equation}
they obtained exponential stability of solutions of (\ref{ver1}).

 In this paper, via a frequency domain approach due to Borichev-Tomilov \cite{BT}, we show that system (\ref{s13}) is polynomially stable for $\alpha\in [1,2)$.\\
Here we want to focus on he following ramarks:\\\\
$\bullet$ System (\ref{s13}) under study  is different from one studied on \cite{ACL}. Indeed, the degeneracy is located at $x=0$.\\\\
$\bullet$ The frequency domain method gives us a sharp polynomial decay rate, howver in \cite{ACL}, stabilization is done under the classical energy method due to Komornik \cite{kk}.\\

The outline of this paper as follows.
In section 2, we introduce our notations, functional space and establish the well-posedness of system under study. In section 3, we set our main result concerning stability.
In the last section, we give a numerical simulation of the transfer function for the control system. 
\section{The semigroup setting}
We define the Hilbert space $H^1_{\alpha,r}(0,1)$ as
\begin{equation}
H^1_{\alpha,r}(0,1)=\{u\in L^2(0,1):\;\;x^{\nicefrac{\alpha}{2}}u_x\in L^2(0,1)\;\text{and}\;u(1)=0\},
\end{equation}
equipped with the following inner product
\begin{equation}
\langle f,g\rangle_{H^1_{\alpha,r}(0,1)}=\int_0^1x^{\nicefrac{\alpha}{2}}f_xx^{\nicefrac{\alpha}{2}}\overline{g_x}dx+\int_0^1f\overline{g}dx,
\end{equation}
and its associated norm
\begin{equation}
\snorm{f}^2_{H^1_{\alpha,r}(0,1)}=\snorm{x^{\nicefrac{\alpha}{2}}f_x}^2_{L^2(0,1)}+\snorm{f}^2_{L^2(0,1)}.
\end{equation}
Moreover, we introduce the operator $A_\alpha:D(A)\subset L^2(0,1)\longrightarrow L^2(0,1)$ as
\begin{equation}\label{op1}
\begin{split}
D(A_\alpha)=\{u\in H^1_{\alpha,r}(0,1)&:\;\;(x^\alpha u_x)_x\in L^2(0,1)\},\\
A_\alpha u&=-(x^\alpha u_x)_x,\quad\forall u\in D(A),
\end{split}
\end{equation}

One  can easily check that $A_\alpha$ is self-adjoint positive 
operator  with compact resolvent. Thus, there exists an orthonormal  basis of eigenfunctions denoted by $(\Psi_n)_{n\in\mathbb{N}^*}$ in $L^2(0,1)$ and a real sequence of eigenvalues $(\mu_n)_{n\in\mathbb{N}^*}$
with $\mu_n>0$ and $\mu_n\rightarrow\infty$ such that
\begin{equation}
A_\alpha\Psi_n=\mu_n\Psi_n,\quad\forall n\in\mathbb{N}^*.
\end{equation}
Next, for $s\geq 0$, we introduce the following extrapolated spaces
\begin{equation}
H^s_{\alpha,r}(0,1)=D(A_\alpha^{\nicefrac{s}{2}})=\{u=\sum_{n\geq 1}a_n\Psi_n\;|\;\snorm{u}^2_s=\sum_{n\geq 1}\mu^s_n\left|a_n\right|^2<\infty\}
\end{equation}
and its dual
\begin{equation}
H^{-s}_{\alpha,r}(0,1)=\left(D(A_\alpha^{\nicefrac{s}{2}})\right)^{\prime}.
\end{equation}

Introducing the following Hilbert space
\begin{equation}
\mathcal{H}_\alpha=H_{\alpha,r}^{1}(0,1)\times {L^{2}(0,1)}
\end{equation}
 equipped with the scalar product
\begin{equation}
\langle(u,v)^\mathsf{T},(\tilde{u},\tilde{v})^\mathsf{T}\rangle_{\mathcal{H}_\alpha}= \int_{0}^{1} x^{\nicefrac{\alpha}{2}} u_{x}x^{\nicefrac{\alpha}{2}}\overline{\tilde{u}_x}dx + \int_{0}^{1} v\overline{\tilde{v}}dx
\end{equation}

If we denote by $Z(t)=(w(t),w^\prime(t))^\mathsf{T}$, then the solution of (\ref{s13}) can be written in the abstract Cauchy problem as
\begin{equation}\label{cp1}
\left\{
\begin{array}{ll}
\begin{split}
Z^\prime(t)&=\mathcal{A}_\alpha Z(t)\\
Z(0)&=Z_0,\\
\end{split}
\end{array}
\right.
\end{equation}
where $Z_0=(w^0,w^1)^\mathsf{T}$ and  $\mathcal{A}_\alpha$ is an unbounded operator of $\mathcal{H}_\alpha$ given by
\begin{equation}
\mathcal{A}_\alpha(u,v)^\mathsf{T}=(v,-A_\alpha u)^\mathsf{T},\quad (u,v)\in D(\mathcal{A}_\alpha)
\end{equation}
with
$$D(\mathcal{A}_\alpha)=\{(u,v)\in H_{\alpha,r}^{1}(0,1)\times H_{\alpha,r}^{1}(0,1),\;\; u\in D(A_\alpha)\;\text{and}\;(x^\alpha u_x)(0)=v(0)\}$$

The well-posedness of (\ref{cp1}) is given by the following proposition.
\begin{prp}\label{main2} 
For an initial data $Z_0\in \mathcal{H}_\alpha$, there exists a unique solution $Z\in C([0,\infty),\mathcal{H}_\alpha )$ to system (\ref{cp1}). Moreover, if $Z_0\in D(\mathcal{A}_\alpha)$, then 
$$Z\in C([0,\infty), D(\mathcal{A}_\alpha))\cap C^1([0,\infty), \mathcal{H}_\alpha).$$
Moreover, the energy of system (\ref{s13}) is given by
\begin{equation}
E_w(t)=\frac{1}{2}\int_0^1\left(w_t^2+x^\alpha w_x^2\right)dx,\quad t\geq 0,
\end{equation}
and satisfies
\begin{equation}\label{endec}
E_w(0)-E_w(t)=\int_0^1\left|w_t(0,t)\right|^2dx.
\end{equation}
\end{prp}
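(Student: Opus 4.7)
The natural route is to apply the Lumer-Phillips theorem to the operator $\mathcal{A}_\alpha$ on $\mathcal{H}_\alpha$: once one knows that $\mathcal{A}_\alpha$ is $m$-dissipative, the existence and uniqueness of a mild solution $Z \in C([0,\infty),\mathcal{H}_\alpha)$ for $Z_0 \in \mathcal{H}_\alpha$, as well as the classical regularity $Z \in C([0,\infty),D(\mathcal{A}_\alpha)) \cap C^1([0,\infty),\mathcal{H}_\alpha)$ for $Z_0 \in D(\mathcal{A}_\alpha)$, are automatic from standard semigroup theory. So the content is to verify dissipativity and maximality.

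For dissipativity, I would take $U=(u,v)^\mathsf{T}\in D(\mathcal{A}_\alpha)$ and expand
$$\langle \mathcal{A}_\alpha U, U\rangle_{\mathcal{H}_\alpha} = \int_0^1 x^\alpha v_x\overline{u_x}\,dx + \int_0^1 (x^\alpha u_x)_x\overline{v}\,dx,$$
integrating by parts in the second integral. The Dirichlet trace $v(1)=0$ inherited from $H^1_{\alpha,r}(0,1)$ kills the boundary contribution at $x=1$, whereas the coupling $(x^\alpha u_x)(0)=v(0)$ encoded in the definition of $D(\mathcal{A}_\alpha)$ produces a boundary term $-|v(0)|^2$ at $x=0$. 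The remaining interior pieces $\int_0^1 x^\alpha v_x\overline{u_x}\,dx - \int_0^1 x^\alpha u_x\overline{v_x}\,dx$ are purely imaginary, so taking real parts yields $\mathrm{Re}\,\langle \mathcal{A}_\alpha U, U\rangle_{\mathcal{H}_\alpha} = -|v(0)|^2 \le 0$.

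For maximality I would show that $I-\mathcal{A}_\alpha$ is surjective on $\mathcal{H}_\alpha$. Given $(f,g)^\mathsf{T}\in\mathcal{H}_\alpha$, the ansatz $v = u - f$ reduces the resolvent equation to the degenerate elliptic problem $u-(x^\alpha u_x)_x = f+g$ with $u(1)=0$ and $(x^\alpha u_x)(0)=u(0)-f(0)$, which I would solve by Lax-Milgram applied to the symmetric bilinear form $a(u,\phi)=\int_0^1 u\overline{\phi}\,dx + \int_0^1 x^\alpha u_x\overline{\phi_x}\,dx + u(0)\overline{\phi(0)}$ on $H^1_{\alpha,r}(0,1)$; the form is manifestly coercive by the very definition of the $H^1_{\alpha,r}$-norm, and the boundary term is well-posed and nonnegative. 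Elliptic regularity then lifts the weak solution into $D(\mathcal{A}_\alpha)$, completing the hypotheses of Lumer-Phillips. The energy identity itself is obtained by multiplying (\ref{s13}) by $\overline{w_t}$, integrating in $x$, observing that the interior part assembles into $\frac{d}{dt}E_w(t)$, and reducing the boundary term $[x^\alpha w_x\overline{w_t}]_0^1$ via $w_t(1,t)=0$ and $(x^\alpha w_x)(0,t)=w_t(0,t)$ to $-|w_t(0,t)|^2$; integrating in time over $[0,t]$ then yields (\ref{endec}), albeit with a time integral on the right-hand side (the printed $dx$ appears to be a typo).

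The main technical obstacle is the meaningfulness of the trace at $x=0$ in the strongly degenerate regime $\alpha\in[1,2)$: functions in $H^1_{\alpha,r}(0,1)$ do not in general admit a pointwise value at zero, so the boundary conditions $(x^\alpha u_x)(0)=v(0)$ and, in the Lax-Milgram step, $u(0)-f(0)$ must be justified. One must check that the extra regularity built into $D(\mathcal{A}_\alpha)$, namely $(x^\alpha u_x)_x\in L^2(0,1)$ together with $x^{\alpha/2} u_x\in L^2(0,1)$, forces $x^\alpha u_x$ to have a continuous representative on $[0,1]$ so that the pointwise value at $0$ is unambiguous, and similarly that the trace of $v$ at $0$ is well defined for $v\in H^1_{\alpha,r}(0,1)$ in this range of $\alpha$. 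All other steps are essentially bookkeeping once this is cleared.
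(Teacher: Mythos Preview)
Your proposal is correct and follows essentially the same route as the paper: Lumer--Phillips via dissipativity (the same integration-by-parts computation yielding $-|v(0)|^2$) and maximality via Lax--Milgram on $H^1_{\alpha,r}(0,1)$. If anything you are more careful than the paper, which silently drops the boundary term in its variational formulation and does not comment on the trace issue at $x=0$; your inclusion of $u(0)\overline{\phi(0)}$ in the bilinear form and your remark about the well-definedness of traces in the strongly degenerate regime are genuine refinements, not deviations.
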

\begin{proof}
Using Lumer-Philips theorem \cite{EN}, it suffices to prove that $\mathcal{A}_\alpha$ is maximal-dissipative on $X$. In fact, for all $(u,v)^\mathsf{T}\in D(\mathcal{A}_\alpha)$, we have
\begin{equation*}
\begin{split}
\Re e\langle\mathcal{A}_\alpha(u,v)^\mathsf{T},(u,v)^\mathsf{T}\rangle&=\Re e\langle (v,-A_\alpha u)^\mathsf{T},(u,v)^\mathsf{T}\rangle\\
&=\Re e\left(\int_0^1x^\alpha v_x\overline{u_x}dx+\int_0^1(x^\alpha u_x)_x\overline{v}dx\right)\\
&=\Re e\left(\int_0^1x^\alpha\left(v_x\overline{u_x}-\overline{v_x}u_x\right)dx+\left[(x^\alpha u_x)\overline{v}\right]\bigg\vert_0^1\right)\\
&=-\left|v(0)\right|^2\leq 0,
\end{split}
\end{equation*}

which proves the dissipativeness of $\mathcal{A}_\alpha$.\\\\
Next, let $\lambda>1,\;(f,g)^\mathsf{T}\in X$ and we look for $(u,v)^\mathsf{T}\in D(\mathcal{A}_\alpha)$ such 
\begin{equation}\label{surj}
(\lambda-\mathcal{A}_\alpha)(u,v)^\mathsf{T}=(f,g)^\mathsf{T}.
\end{equation}
That's
\begin{equation}\label{pc1}
\left\{
\begin{array}{ll}
\begin{split}
&\lambda u-v=f\\
&A_\alpha u+\lambda v=g.\\
\end{split}
\end{array}
\right.
\end{equation}
If we suppose that we have found $u$ with an appropriate regularity, then we get
$$v=\lambda u-f\in H^1_{\alpha,r}.$$
Inserting the previous expression in the second equation of (\ref{pc1}) we find that $u$ must satisfy
$$A_\alpha u+\lambda^2 u=g+\lambda f.$$
Multiplying the previous identity by $\overline{w}\in H^1_{\alpha,r}$,  we get
\begin{equation}\label{fs}
    -\int_0^1\left(x^\alpha u_x\right)_x\overline{w}dx+\lambda^2\int_0^1 u\overline{w}dx=\int_0^1\left(g+\lambda f\right)\overline{w}dx.
    \end{equation}
    By a formal integrations by parts, we obtain
		\begin{equation}\label{lm}
    \begin{split}
		\int_0^1x^\alpha u_x\overline{w_x}+\lambda^2\int_0^1 u\overline{w}dx=\int_0^1\left(g+\lambda f\right)\overline{w}dx.
		\end{split}
    \end{equation}
		Thus, equation (\ref{lm}) becomes
		\begin{equation}\label{llm}
		{\bf b}(u,w)={\bf F}(w),\quad w\in V=D(A_\alpha),
		\end{equation}
		where ${\bf b}:H^1_{\alpha,r}\times H^1_{\alpha,r}\longrightarrow\mathbb{R}$ is a bilinear form given by
		$${\bf b}(u,w)=\int_0^1\left(x^\alpha u_x\overline{w_x}+\lambda^2u\overline{w}\right)dx$$
		and ${\bf F}:H^1_{\alpha,r}\longrightarrow\mathbb{R}$ is a linera form given by
		$${\bf F}(w)=\int_0^1\left(g+\lambda
    f\right)\overline{w}dx.$$
		Since ${\bf b}$ is a continuous bilinear coercive form on $H^1_{\alpha,r}$(this follows immediately) and  ${\bf F}$ is a continuous linear form on $H^1_{\alpha,r}$, then by using the Lax-Milgram theorem, we conclude that problem
(\ref{llm}) has a unique solution $u\in H^1_{\alpha,r}$.\\
By an appropriate
choice of $$v=\lambda u-f,$$
we ensured that $(u,v)^\mathsf{T}$ is a
solution of (\ref{surj}), and thus $(\lambda I-\mathcal{A}_\alpha)$ is
surjective. Finally, the Lumer-Phillips theorem  leads to the claim.\\
For the identity (\ref{endec}), it's easy to check.
\end{proof}
\section{Stability results}
First of all, let us recall  the following result due to Borichev and Tomilov\cite{BT} which is will be needed later.
\begin{thm}\label{tmb}(See\cite{BT})
Let $\mathcal{A}$  be the generator of a $C_0$-semigroup of contractions on a Hilbert space
$X$. Then, 
\begin{equation}
\left\|e^{t\mathcal{A}}U_0\right\|_X\leq\frac{C}{t^{\nicefrac{1}{\ell}}}\left\|U_0\right\|_{D(\mathcal{A})},\quad t>0
\end{equation}
for some constant $C>0$, if and only if
\begin{equation}\label{sir1}
i\mathbb{R}\subset\rho(\mathcal{A})
\end{equation}
and
\begin{equation}\label{sir2}
\displaystyle\lim_{\left|\lambda\right|\rightarrow \infty}\sup\frac{1}{\left|\lambda\right|^\ell}\left\|(i\lambda-\mathcal{A})^{-1}\right\|<\infty.
\end{equation}
\end{thm}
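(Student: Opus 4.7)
\emph{Proof plan.} The statement is quoted verbatim from Borichev--Tomilov \cite{BT}, so in practice I would simply cite it; for the sake of a plan I describe the skeleton of the original argument. The assertion is an equivalence, so the proof naturally splits into two implications.

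For the necessity (decay implies resolvent bounds), I would start from the estimate $\|e^{tA}U_0\|_X \leq C t^{-1/\ell}\|U_0\|_{D(A)}$ and first establish $i\mathbb{R}\subset \rho(A)$. Since $A$ generates a contraction semigroup its spectrum lies in $\overline{\mathbb{C}_-}$, so only the imaginary axis is at issue; a spectral point there would contradict the decay through an approximate eigenvector argument, because the semigroup would fail to decay along the corresponding quasi-eigenvectors. For the quantitative bound, on $D(A^2)$ I would use the representation $R(i\lambda, A)U_0 = \int_0^\infty e^{-i\lambda t} e^{tA}U_0\,dt$, integrate by parts once to gain a factor of $t^{-1}$, and split the resulting integral at $t\approx |\lambda|^\ell$ using the polynomial decay hypothesis, which yields $\|R(i\lambda, A)\| = O(|\lambda|^\ell)$.

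The sufficiency (resolvent bounds imply decay) is the nontrivial content. The strategy has three steps. First, a Neumann-series perturbation argument extends $R(\lambda, A)$ analytically to a thin strip $\{\operatorname{Re}\lambda > -c|\lambda|^{-\ell}\}$ for large $|\operatorname{Im}\lambda|$. Second, one invokes the Plancherel identity, which is available precisely because $X$ is a Hilbert space:
\begin{equation*}
2\pi\int_0^\infty e^{-2\epsilon t}\|e^{tA}x\|_X^2\,dt = \int_{-\infty}^\infty \|R(\epsilon+i\eta, A)x\|_X^2\,d\eta, \qquad \epsilon > 0,\ x\in X.
\end{equation*}
Third, a contour shift moves the integration from $\operatorname{Re}\lambda=\epsilon$ inside the analytic strip from the first step; letting $\epsilon\to 0^+$ and combining with the polynomial resolvent bound produces a weighted $L^2$ decay estimate for $e^{tA}x$ with $x\in D(A)$, which is then upgraded to the pointwise rate $t^{-1/\ell}$ by a standard semigroup-smoothing interpolation.

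The main obstacle is the third step: the contour shift must carefully balance the polynomial growth of $R(i\eta, A)$ against the width $|\eta|^{-\ell}$ of the strip of analyticity, and the sharp exponent $1/\ell$ emerges precisely from this balance. A specifically Hilbert-space feature here is that Plancherel's identity avoids the logarithmic loss which is otherwise unavoidable in general Banach spaces; this is exactly why the rate $t^{-1/\ell}$ can be made sharp in the present setting.
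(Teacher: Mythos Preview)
Your instinct is right: the paper does not prove this theorem at all. It is stated as Theorem~3.1 with the parenthetical ``(See \cite{BT})'' and is then used as a black box in the proof of Theorem~3.2; there is no argument given for it in the paper. So your opening sentence --- that in practice you would simply cite Borichev--Tomilov --- already matches exactly what the authors do.

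The sketch you go on to give is therefore strictly more than what the paper contains. As a summary of the original Borichev--Tomilov argument it is broadly accurate: the necessity direction is soft, and the sufficiency direction does hinge on extending the resolvent into a region of the form $\{\operatorname{Re}\lambda > -c(1+|\operatorname{Im}\lambda|)^{-\ell}\}$, exploiting Plancherel in the Hilbert-space setting, and then converting an integrated estimate into the pointwise rate $t^{-1/\ell}$. One small caution: the passage from the $L^2$-in-time bound to the pointwise bound is not quite ``semigroup-smoothing interpolation'' in the analytic-semigroup sense; it is rather the elementary observation that $t\mapsto\|e^{tA}(1-A)^{-1}\|$ is nonincreasing for a contraction semigroup, which lets you turn an $L^2$ tail estimate into a sup estimate. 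But none of this is at issue for the present paper, which treats the theorem purely as a citation.
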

Now, we state our main result.
\begin{thm}\label{thp}
Let $\alpha\in[1,2)$. Then, the total energy of system (\ref{cp1}) decays to zero polynomially with the rate $t^{-\nicefrac{1}{2}}$, that's
\begin{equation}
\left\|e^{t\mathcal{A}_\alpha}U_0\right\|_X\leq\frac{C}{t^{\nicefrac{1}{2}}}\left\|U_0\right\|_{D(\mathcal{A}_\alpha)},\quad t>0.
\end{equation}
\end{thm}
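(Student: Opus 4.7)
The plan is to apply Theorem \ref{tmb} with exponent $\ell = 2$, which requires verifying two conditions: (i) $i\mathbb{R}\subset\rho(\mathcal{A}_\alpha)$, and (ii) the resolvent estimate $\|(i\lambda-\mathcal{A}_\alpha)^{-1}\|_{\mathcal{L}(\mathcal{H}_\alpha)} = O(|\lambda|^2)$ as $|\lambda|\to\infty$.

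\textbf{Imaginary axis in the resolvent set.} Since $A_\alpha$ is positive self-adjoint with compact resolvent, the same is true of $\mathcal{A}_\alpha$, so its spectrum is discrete and $0\in\rho(\mathcal{A}_\alpha)$. Assume $i\lambda\neq 0$ is an eigenvalue with eigenvector $U=(u,v)^\mathsf{T}$. Taking real parts in $\langle\mathcal{A}_\alpha U,U\rangle = i\lambda\|U\|^2$, the dissipation computation from Proposition \ref{main2} yields $v(0)=0$; since $v=i\lambda u$ this forces $u(0)=0$, and the boundary relation $(x^\alpha u_x)(0)=v(0)=0$ combined with $u(1)=0$ gives an overdetermined Cauchy problem for $-(x^\alpha u_x)_x = \lambda^2 u$. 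The change of variable $y=x^{1-\alpha/2}/(1-\alpha/2)$, valid for $\alpha\in[1,2)$, transforms this into a non-degenerate ODE of Bessel type, for which uniqueness from trivial Cauchy data at the degenerate endpoint forces $u\equiv 0$, a contradiction.

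\textbf{Resolvent bound by contradiction.} Suppose there exist $\lambda_n\in\mathbb{R}$ with $|\lambda_n|\to\infty$ and $U_n=(u_n,v_n)^\mathsf{T}\in D(\mathcal{A}_\alpha)$ satisfying $\|U_n\|_{\mathcal{H}_\alpha}=1$ while $F_n:=(i\lambda_n-\mathcal{A}_\alpha)U_n = (f_n,g_n)^\mathsf{T}$ obeys $\lambda_n^2\|F_n\|_{\mathcal{H}_\alpha}\to 0$. Taking the real part of $\langle F_n,U_n\rangle$ and invoking the dissipation identity gives at once $|v_n(0)|^2 = -\Re\langle F_n,U_n\rangle = o(\lambda_n^{-2})$. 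Eliminating $v_n = i\lambda_n u_n - f_n$ from the second equation of the system reduces the problem to the scalar identity $A_\alpha u_n - \lambda_n^2 u_n = h_n$ with $\|h_n\|_{L^2}=o(|\lambda_n|^{-1})$. The core step is a Rellich-type multiplier estimate: testing this identity against $x\,\overline{(u_n)_x}$ (the natural multiplier adapted to the degeneracy at $x=0$), integrating by parts, taking real parts, and combining with the $L^2$-identity obtained by testing against $\overline{u_n}$, one derives
$$\int_0^1 x^\alpha |(u_n)_x|^2\,dx + \int_0^1 |v_n|^2\,dx \leq C\left(|v_n(0)|^2 + |v_n(0)||u_n(0)| + \frac{\|h_n\|_{L^2}\|u_n\|_{L^2}}{|\lambda_n|}\right) = o(1),$$
which contradicts $\|U_n\|_{\mathcal{H}_\alpha}=1$.

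\textbf{Main obstacle.} The principal difficulty is the multiplier identity, because the straightforward integration by parts of $\int_0^1 -(x^\alpha u_x)_x\cdot x\,\overline{u_x}\,dx$ produces a coefficient $(1-\alpha)/2$ in front of $\int x^\alpha|u_x|^2$, which is non-positive for $\alpha\in[1,2)$; hence the multiplier $m(x)=x$ alone does \emph{not} close the estimate and a careful combination with the $L^2$-identity (testing against $\overline{u_n}$) is required to recover coercivity. The boundary contribution at $x=0$ of the form $x^{\alpha+1}|u_x|^2$ must be shown to vanish using the regularity $x^\alpha u_x\in H^1(0,1)\hookrightarrow C^0$ and the hypothesis $\alpha\geq 1$, while the upper threshold $\alpha<2$ is what allows the change of variable in Step 1 and ensures the natural multiplier geometry. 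A careful bookkeeping of the powers of $\lambda_n$ in every remainder term is essential to match exactly the exponent $\ell=2$ demanded by the target decay rate $t^{-1/2}$.
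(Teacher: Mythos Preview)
Your overall strategy (Borichev--Tomilov with $\ell=2$, contradiction for the resolvent bound) matches the paper's, but the key resolvent-estimate step diverges from the paper and contains a genuine gap. The paper does \emph{not} use a Rellich multiplier: it deduces $\beta_n^{\,l}|v_n(0)|^2\to 0$ from the dissipation, then argues via a Poincar\'e-type bound that $v_n\to 0$ in $L^2(0,1)$, and finally tests the reduced scalar equation against $iu_n$ to force $\|u_n\|_{H^1_{\alpha,r}}\to 0$. Your multiplier route, by contrast, cannot close. Carrying out the integrations by parts with $m(x)=x$ and combining with the $L^2$-identity exactly as you propose yields, up to $o(1)$ remainders,
\[
(2-\alpha)\int_0^1 x^\alpha |u_{n,x}|^2\,dx \;=\; |u_{n,x}(1)|^2,\qquad
\|U_n\|^2_{\mathcal{H}_\alpha}\;\approx\;\frac{2}{2-\alpha}\,|u_{n,x}(1)|^2.
\]
The surviving boundary term lives at $x=1$, \emph{not} at the damped endpoint $x=0$, and the dissipation gives you no information whatsoever about $u_{n,x}(1)$. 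Your displayed inequality, which bounds the energy solely by quantities involving $v_n(0)$ and $u_n(0)$, is therefore unjustified: the multiplier geometry funnels the energy to the wrong boundary. This is precisely the mechanism behind the failure of observability from $x=1$ in the strongly degenerate regime noted in \cite{ACL}, so the obstacle you flag as the ``main difficulty'' is not a technicality to be overcome by bookkeeping but a structural obstruction to the method.

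A secondary issue concerns your treatment of the imaginary-axis condition. Invoking ``uniqueness from trivial Cauchy data at the degenerate endpoint'' is delicate: for $\alpha\in[1,2)$ the operator is singular at $x=0$, standard ODE uniqueness does not apply there, and the trace $u(0)$ is not even well defined for generic elements of $H^1_{\alpha,r}(0,1)$. The paper sidesteps this entirely by writing down the explicit eigenfunctions in terms of the Bessel functions $J_\nu$ and using the Dirichlet condition at $x=1$.
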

\begin{proof}
In view of  Theorem\ref{tmb},  we need firstly to identify the spectrum of $\mathcal{A}_\alpha$ lying on the imaginary axis. We have then to show that :
\begin{enumerate}
\item $\ker\,\left(i\beta-\mathcal{A}_\alpha\right)=\{0\},\quad\forall\beta\in\mathbb{R}$, and
\item $R(\left(i\beta-\mathcal{A}_\alpha\right)=\mathcal{H}_\alpha,\quad\forall\beta\in\mathbb{R}$.
\end{enumerate}
This is the aim  of the two following lemmas.
\begin{lem}
There is no eigenvalue of $\mathcal{A}_\alpha$ on the imaginary axis.
\end{lem}
\begin{proof}
We proceed by contradiction. Assume that there exists at least one $\tilde{\lambda}=i\beta\in\sigma(\mathcal{A}_\alpha),\,\beta\in\mathbb{R}$ on the imaginary axis and \\$\tilde{Z}=(u,v)^\mathsf{T}\in D(\mathcal{A}_\alpha)$ such that 
\begin{equation}\label{dic}
\mathcal{A}_\alpha\tilde{Z}=\tilde{\lambda}\tilde{Z}.
\end{equation}
Then, we have
\begin{eqnarray}
i\beta u-v&=&0\label{e21}\\
A_\alpha u+i\beta v&=&0.\label{e22}
\end{eqnarray}
By taking the inner product of (\ref{dic}) with $\tilde{Z}$ and using the dissipativity of $\mathcal{A}_\alpha$,  we have
\begin{equation}
0=\Re e\langle(i\beta-\mathcal{A}_\alpha)\tilde{Z},\tilde{Z}\rangle=\left|v(0)\right|^2,
\end{equation}
which yields $v(0)=0$. Next, according to (\ref{e21})-(\ref{e22}), we have
\begin{equation}\label{redn}
(x^\alpha u_x)_x+\beta^2u=0,\quad \beta\in\mathbb{R}.
\end{equation}
 The solutions of (\ref{redn}) are given as follows:
\begin{equation}\label{issa}
\beta_n=\kappa j_{\nu,n}\;\;\text{and}\;\;u_n(x)=\frac{\sqrt{2\kappa}}{\left|J^\prime_{\nu}(j_{\nu,n})\right|}x^{\frac{1-\alpha}{2}}J_{\nu}\left(j_{\nu,n}x^\kappa\right),
\end{equation}
where 
\begin{equation}
\nu=\frac{\alpha-1}{2-\alpha},\;\kappa=\frac{2-\alpha}{2}\;\;\text{and}\;J_\nu(x)=\sum_{m\geq 0}\frac{(-1)^m}{m!\Gamma(m+\nu+1)}\left(\frac{x}{2}\right)^{2m+\nu},\;x\geq 0.
\end{equation}
Here $\Gamma(.)$ is the Gamma function, and $\left(j_{\nu,n}\right)_{n\geq 1}$  are the positive zeros of the Bessel function $J_\nu$. See \cite{fa1} for more details.\\
As $u_n\in D(A_\alpha),\;\forall n\geq 1$, then in particular $u_n(1)=0$ which gives us
 $u=0$, and taking account (\ref{e21}) we obtain $v=0$ which contradicts the fact that 
$\tilde{Z}=(u,v)^\mathsf{T}= 0$ is an eigenvector.\\
 The desired result follows.
\end{proof}
\begin{lem}
For all $\beta\in\mathbb{R}$, one has
\begin{equation*}
R(\left(i\beta-\mathcal{A}_\alpha\right)=\mathcal{H}_\alpha.
\end{equation*}
\end{lem}
\begin{proof}
Similarly to the proof of second part of Proposition\ref{main2}, so we omit.
\end{proof}

In order to complete the proof of  Theorem\ref{thp}, it remains to check condition(\ref{sir2}) of Theorem \ref{tmb}. For this end, we proceed by using a contradiction argument. Thus, we assume that (\ref{sir2}) does not hold, then there exist sequences $(\beta_n)_{n},\;\beta_n\in\mathbb{R}_+,\;\beta_n\rightarrow\infty$ and $(U_n)_n$ with $U_n=(u_n,v_n)$ in $D(\mathcal{A}_\alpha),\;n\in\mathbb{N}$, such that
\begin{equation}\label{uni1}
\left\|U_n\right\|_{\mathcal{H}_\alpha}=1,\quad\forall n\in\mathbb{N}
\end{equation}
and
\begin{equation}\label{uni2}
\beta_n^l\left(i\beta_n-\mathcal{A}_\alpha\right)U_n\rightarrow 0,\quad\text{in}\,\mathcal{H}_\alpha\,\text{as}\,n\rightarrow\infty.
\end{equation}
This yields: As $n\rightarrow\infty$
\begin{equation}\label{uni3}
\begin{split}
\beta_n^l\left(i\beta_nu_n-v_n\right)=f_n\rightarrow 0\quad&\text{in}\;H^1_{\alpha,r}(0,1)\\
\beta_n^l\left(A_\alpha u_n+i\beta_nv_n\right)=g_n\rightarrow 0\quad&\text{in}\;L^2(0,1).\\
\end{split}
\end{equation}
Taking into account the following  
\begin{equation}\label{z11}
\beta_n^l\left|v_n(0)\right|^2=\Re e\langle\beta_n^l(i\beta_n-\mathcal{A}_\alpha)U_n,U_n\rangle\leq\left\|\beta_n^l(i\beta_n-\mathcal{A}_\alpha)\right\|,
\end{equation}
we get 
\begin{equation}
\beta_n^l\left|v_n(0)\right|^2,\quad\text{as}\;n\rightarrow\infty.
\end{equation}
On the other hand, we can write
\begin{equation}
\left|v_n^2(1)-v_n^2(0)\right|=\left|\int_0^1v_{n,x}^2dx\right|\leq\left\|v_{n,x}\right\|^2_{L^2}
\end{equation}
which implies by invoking Poincar\'e's inequality
\begin{equation}\label{uni4}
v_n\rightarrow 0\quad\text{in}\;L^2(0,1).
\end{equation}
Multiplying the first equation in (\ref{uni3}) by $i\beta_n$ and summing with the second equation to get
\begin{equation}\label{uni6}
-\beta_n^l\left(x^\alpha u_{n,x}\right)_x-\beta_n^{l+2}u_n=g_n+i\beta_nf_n.
\end{equation}
Now, setting $l=2$ and taking the $L^2$-inner product of (\ref{uni6}) with $(iu_n)$ we arrive after, taking real parts, at
\begin{equation*}
\left\|x^{\nicefrac{\alpha}{2}}u_{n,x}\right\|^2_{L^2}+\beta_n^2\left\|u_n\right\|^2_{L^2}\rightarrow 0.
\end{equation*}
Since $\beta_n\rightarrow\infty$ as $n$ tends to infinity, then it follows that $1\leq \beta_n$ for sufficiently big $n$, so we can write
\begin{equation*}
\left\|x^{\nicefrac{\alpha}{2}}u_{n,x}\right\|^2_{L^2}+\left\|u_n\right\|^2_{L^2}\rightarrow 0,
\end{equation*}
that's
\begin{equation}\label{uni7}
\left\|u_n\right\|_{H^1_{\alpha,r}}\rightarrow 0.
\end{equation}
Combining (\ref{uni4}) and (\ref{uni7}), we have a contradiction with (\ref{uni1}). Thus, (\ref{sir2}) is verified and the proof of Theorem\ref{thp} is complete.
\end{proof}
Now, let us further show the lack of exponential decays for solutions of (\ref{s13}) by using a frequency domain estimate for exponential stability as described in \cite{Hu,Pr}. For this end, we  state  the following result.
\begin{lem}
There exists at least one sequence $(\lambda_n,F_n)$ such that $\lambda_n\rightarrow +\infty$ as $n\rightarrow\infty$ and
\begin{equation}\label{reseq}
\left\|\left(i\lambda_n-\mathcal{A}_\alpha\right)^{-1}F_n\right\|_{\mathcal{H}_\alpha}\rightarrow\infty\quad\text{as}\;\,n\rightarrow\infty,
\end{equation}
with $F_n\in\mathcal{H}_\alpha$ and $\left\|F_n\right\|_{\mathcal{H}_\alpha}$ is bounded, $\tilde{M}$ is positive constant.
\end{lem}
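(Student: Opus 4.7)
The plan is to exploit the undamped eigenstructure from \eqref{issa}: for each $n$, the pair $(u_n,i\beta_n u_n)^{\mathsf T}$ with $\beta_n=\kappa j_{\nu,n}$ formally satisfies $\mathcal{A}_\alpha(u_n,i\beta_n u_n)^{\mathsf T}=i\beta_n(u_n,i\beta_n u_n)^{\mathsf T}$ in the interior of $(0,1)$, since $A_\alpha u_n=\beta_n^2 u_n$. Its only failure to be a genuine eigenvector of $\mathcal{A}_\alpha$ lies in the boundary compatibility condition $(x^\alpha u_x)(0)=v(0)$ built into $D(\mathcal{A}_\alpha)$: one has $(x^\alpha u_{n,x})(0)=0$ for the $J_\nu$-type eigenfunction, while $i\beta_n u_n(0)\neq 0$ (the Bessel asymptotics give $u_n(0)\sim j_{\nu,n}^{\nu+1/2}$). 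Repairing this defect by a small boundary-layer correction will produce the required witness sequence, by converting the formal quasi-eigenvector into a genuine element of $D(\mathcal{A}_\alpha)$ with controlled residual.

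Concretely, I would set $\lambda_n=\beta_n=\kappa j_{\nu,n}$, which diverges to $+\infty$ since $j_{\nu,n}\sim n\pi$ as $n\to\infty$, and start from the ansatz $U_n=(u_n,i\beta_n u_n)^{\mathsf T}$ with $u_n$ normalized so that $\|u_n\|_{L^2}=1$. Using $\|x^{\alpha/2}u_{n,x}\|_{L^2}^2=\langle A_\alpha u_n,u_n\rangle=\beta_n^2$, one immediately has $\|U_n\|_{\mathcal{H}_\alpha}^2=2\beta_n^2$. Next, introduce a boundary corrector $W_n$ localized in a shrinking neighborhood of $x=0$ so that $\widetilde U_n:=U_n+W_n\in D(\mathcal{A}_\alpha)$, designed so that (a) the $\mathcal{H}_\alpha$-norm of $W_n$ is negligible compared to $\beta_n$, so $\|\widetilde U_n\|_{\mathcal{H}_\alpha}\sim\sqrt{2}\,\beta_n$, and (b) the residual $(i\beta_n-\mathcal{A}_\alpha)W_n$ stays bounded in $\mathcal{H}_\alpha$ as $n\to\infty$. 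Then $F_n:=(i\beta_n-\mathcal{A}_\alpha)\widetilde U_n=(i\beta_n-\mathcal{A}_\alpha)W_n$ is uniformly bounded, and the identity $\widetilde U_n=(i\beta_n-\mathcal{A}_\alpha)^{-1}F_n$ yields \eqref{reseq} at once.

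The main obstacle is the construction of the corrector $W_n$. It must restore the boundary compatibility violated by a quantity of size $|i\beta_n u_n(0)|\sim j_{\nu,n}^{\nu+3/2}$, yet remain small in $\mathcal{H}_\alpha$, and above all the image $(A_\alpha-\beta_n^2)W_n$ must stay bounded in $L^2$ in spite of the explicit large multiplier $\beta_n^2$. This balancing act is naturally addressed by a matched boundary-layer ansatz built from the fundamental solutions $\phi_1(x)=x^{(1-\alpha)/2}J_\nu(\beta_n x^\kappa/\kappa)$ and $\phi_2(x)=x^{(1-\alpha)/2}Y_\nu(\beta_n x^\kappa/\kappa)$ of the resolvent ODE $(x^\alpha u_x)_x+\beta_n^2 u=0$, truncated by a cutoff concentrated on a scale shrinking like a negative power of $\beta_n$; the Wronskian identity $W(J_\nu,Y_\nu)(z)=2/(\pi z)$ together with the asymptotics $|J_\nu'(j_{\nu,n})|\sim\sqrt{2/(\pi j_{\nu,n})}$ and $j_{\nu,n}\sim n\pi$ would then be used to verify that with the correct choice of scale both (a) and (b) hold, completing the construction.
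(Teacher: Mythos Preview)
Your plan is on the right track and, in fact, is more carefully thought out than the paper's own argument. Both you and the paper choose $\lambda_n=\kappa j_{\nu,n}$ and build the witness from the Bessel eigenfunctions $u_n$ of the undamped operator $A_\alpha$. The paper, however, proceeds very loosely: in the resolvent system it simply writes the Neumann condition $(x^\alpha u_x)(0)=0$ in place of the Robin condition $(x^\alpha u_x)(0)=v(0)=i\lambda u(0)$ that actually defines $D(\mathcal{A}_\alpha)$, takes the eigenpair $(\lambda_n,u_n)$ as if it solved the damped resolvent problem, and then just evaluates a Bessel asymptotic to obtain $\|U_n\|^2_{\mathcal{H}_\alpha}\gtrsim\lambda_n$. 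It never specifies $F_n$ explicitly nor verifies the domain membership you rightly flag. So your identification of the boundary--compatibility defect is the honest point the paper sweeps under the rug.

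That said, your corrector programme is only a sketch, and the difficulty you defer is real. The boundary mismatch has size $|i\beta_n u_n(0)|\sim\beta_n^{\nu+3/2}$, with $\nu=(\alpha-1)/(2-\alpha)\to\infty$ as $\alpha\uparrow 2$; a straightforward cutoff corrector $W_n=(0,\,c_n\chi(\cdot/\delta_n))$ or $W_n=(w_n,0)$ fails to make $\|(i\beta_n-\mathcal{A}_\alpha)W_n\|_{\mathcal{H}_\alpha}$ bounded for any choice of $\delta_n$ once one tracks the exponents. Moreover, the second fundamental solution $\phi_2(x)=x^{(1-\alpha)/2}Y_\nu(\beta_n x^\kappa/\kappa)$ behaves like $x^{1-\alpha}$ near $0$, hence fails to lie in $L^2(0,1)$ as soon as $\alpha\ge 3/2$; so a boundary layer built from $\phi_2$ down to $x=0$ is not admissible in that range. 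If you pursue this route you will need either a more delicate two--scale construction (cutting $\phi_2$ away from $0$ and matching with a $\phi_1$--piece near $0$), or---simpler---to argue dually: take $F_n=(0,u_n)$, set $V_n=(i\lambda_n-\mathcal{A}_\alpha)^{-1}F_n=(p_n,i\lambda_n p_n)$, and bound $\|V_n\|_{\mathcal{H}_\alpha}$ from below by testing the resulting Robin BVP against $u_n$ and using the boundary identity $i\lambda_n p_n(0)\overline{u_n(0)}=1$ together with a trace/interpolation estimate. Either way, the key asymptotic input is the one you already isolated, $|u_n(0)|\sim j_{\nu,n}^{\nu+1/2}$.
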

\begin{proof}
Setting the following resolvent equation
\begin{equation}
\left(i\lambda -\mathcal{A}_\alpha\right)U=F,\quad \lambda\in\mathbb{R},
\end{equation}
where $U=(u,v)^\mathsf{T}$ and $F=(f,g)^\mathsf{T}$. That's
\begin{equation}\label{vbg1}
\left\{
 \begin{array}{c}
		\begin{split}
  i\lambda u-v&=f \\
  Au+i\lambda v&=g. \\
	\end{split}
   \end{array}
    \right.
\end{equation}
Choosing $f=0$ and substituting $v=i\lambda u$ into the second equation of (\ref{vbg1}), we get
\begin{equation}\label{vbg2}
\left\{
 \begin{array}{lll}
		\begin{split}
  &\left(x^\alpha u_x\right)_x+\lambda^2 u=g,\quad x\in (0,1), \\
	&\text{with boundary conditions}\\
	&\left(x^\alpha u_x\right)(0)=0,\quad u(1)=0.
	\end{split}
   \end{array}
    \right.
\end{equation}
So, according to \cite{fa1}, the solutions of (\ref{vbg2}) are given as follows:
\begin{equation}\label{issa}
\lambda_n=\kappa j_{\nu,n}\;\;\text{and}\;\;u_n(x)=\frac{\sqrt{2\kappa}}{\left|J^\prime_{\nu}(j_{\nu,n})\right|}x^{\frac{1-\alpha}{2}}J_{\nu}\left(j_{\nu,n}x^\kappa\right),
\end{equation}
where 
\begin{equation}
\nu=\frac{\alpha-1}{2-\alpha},\;\kappa=\frac{2-\alpha}{2}\;\;\text{and}\;J_\nu(x)=\sum_{m\geq 0}\frac{(-1)^m}{m!\Gamma(m+\nu+1)}\left(\frac{x}{2}\right)^{2m+\nu},\;x\geq 0.
\end{equation}
Here $\Gamma(.)$ is the Gamma function, and $\left(j_{\nu,n}\right)_{n\geq 1}$  are the positive zeros of the Bessel function $J_\nu$.\\
Using asymptotic behavior of Bessel functions \cite{ol}, we get
\begin{equation}\label{qss}
\begin{split}
 u_{n}(x)= \frac{\sqrt{2k}}{\vert J_{\nu}^{'}(x) \vert}x^{\frac{1-\alpha}{2}} J_{\nu}(j_{n,\nu}x^{k})&\simeq  \frac{\sqrt{2k}}{\vert J_{\nu}^{'}(x) \vert}x^{\frac{1-\alpha}{2}} \left(\frac{j_{n,\nu}x^{k}}{2}\right)^{\nu} \\
&\simeq \frac{2^{-\nu}\sqrt{k\pi}}{\Gamma(\nu+1)}\left(j_{n,\nu} \right)^{\nu+\frac{1}{2}}. 
\end{split}
\end{equation}
Now, evaluating
\begin{equation}
\begin{split}
\left\|\left(u_n,iu_n\right)\right\|^2_{\mathcal{H}_\alpha}&=\int_0^1x^{\alpha}u_{nx}^2dx+\int_0^1u^2_ndx\\
&=\int_0^1\frac{2^{-2\nu}\sqrt{k\pi}}{\Gamma^2(\nu+1)}\left(j_{n,\nu} \right)^{2\nu+1}dx\\
&=\tilde{M}\lambda_n^{2\nu+1}\\
&\geq \tilde{M}\lambda_n\rightarrow \infty,\quad\text{as}\;n\rightarrow\infty.
\end{split}
\end{equation}
\end{proof}

\section{Numerical simulation of transfer function}
Here we begin by recalling some aspects on input-output systems (see \cite{CZA} for more details). So, let us consider $U,\,X$ be two Hilbert spaces and consider the
abstract control problem
\begin{equation}\label{11}
\left\{
\begin{array}{ll}
\dot{z}(t)=Az(t)+Bu(t),\quad z(0)=z_0\\
y(t)=B^*z(t)\\
\end{array}
\right.
\end{equation}
where $A:D(A)\subset X\longrightarrow X$ generates a
$C_0$-semigroups of contractions $T(t)_{t\geq 0}$,
$B\in\mathcal{L}(U,X)$ is an admissible control operator, $u(.)\in
L^2_{\text{loc}}(0,+\infty; U)$ design the input (or control) function and $y(.)$ design the output (or observation) function. The transfer function of (\ref{11})
is given by $H(\lambda)\in\mathcal{L}(U)$ such that
$$\hat{y}(\lambda)=H(\lambda)\hat{u}(\lambda),$$
where $\hat{.}$ denotes the Laplace transformation. For these
concepts, see \cite{TW}.\\
Now, we consider the following control system 
\begin{equation}\label{e1}
\left\{
\begin{array}{lll}
\begin{split}
\omega_{tt}(x,t)-(x^{\alpha}\omega_{x}(x,t))_{x}=0
\quad & \text{on}\;(0, 1)\times(0, T)\\
(x^{\alpha}\omega_{x})(0,t)=\theta(t),\;\omega(1,t)=0\quad &  t\in (0,T)\\
\omega(x,0)=0,\;\omega_t(x,0)=0\quad & \text{on}\;(0,1)\\
\end{split}
\end{array}\right.
\end{equation}
where $\theta(.)\in L^2(0,T)$. Then, system (\ref{e1}) can be written on the abstract form as (\ref{11}) with $B$ is an unbounded control operator. Hence admissibility of $B$ is not verified and as it was shown in\cite{BLR}, we replace this issue by proving the boundedness of its associated transfer function.
More precisely, we have the following.
\begin{lem}
Let $\gamma>0$ and $C_{\gamma}=\{\lambda\in{\C},\quad \Re e\lambda=\gamma \}$. Then, the transfer function of (\ref{e1}) is given by 
\begin{equation}\label{treee}
\lambda\in C_{\gamma}\rightarrow H(\lambda)=\frac{2((\nu+1)\lambda)^{\nu+1}}{\lambda\Gamma(\nu+1)}\left(((\nu+1)\lambda)^{\nu}\frac{K_{\nu}((\nu+1)\lambda)}{I_{\nu}((\nu+1)\lambda)}-c_{\nu}\right)
\end{equation}
and is bounded on $C_\gamma$, where $I_\nu, K_\nu$ are the modified Bessel functions of first and  second kind and $c_\nu$ is constant complex number.
\end{lem}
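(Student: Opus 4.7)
My plan is to proceed by applying the Laplace transform to the initial-boundary value problem \eqref{e1} and reducing the resulting boundary value problem to a modified Bessel equation whose solution can be written down explicitly, and then to check boundedness using the known asymptotic behaviour of modified Bessel functions.

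Step one: with zero initial data and $\hat\omega(x,\lambda) = \int_0^\infty e^{-\lambda t}\omega(x,t)\,dt$, applying the Laplace transform to \eqref{e1} yields the ODE
\begin{equation*}
(x^\alpha \hat\omega_x)_x = \lambda^2 \hat\omega,\quad x\in(0,1),
\qquad (x^\alpha\hat\omega_x)(0,\lambda)=\hat\theta(\lambda),\quad \hat\omega(1,\lambda)=0.
\end{equation*}
Step two: using the same change of variables that produced the Bessel eigenfunctions in \eqref{issa}, namely $\hat\omega(x,\lambda)=x^{(1-\alpha)/2}\Phi(z)$ with $z=\lambda x^\kappa/\kappa$ and $\kappa=(2-\alpha)/2$, the ODE is converted into the modified Bessel equation of order $\nu=(\alpha-1)/(2-\alpha)$. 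Consequently
\begin{equation*}
\hat\omega(x,\lambda)= x^{(1-\alpha)/2}\bigl[\,A(\lambda)\,I_\nu\bigl(\tfrac{\lambda}{\kappa}x^\kappa\bigr)+B(\lambda)\,K_\nu\bigl(\tfrac{\lambda}{\kappa}x^\kappa\bigr)\bigr].
\end{equation*}
Since $1/\kappa=2(\nu+1)$, the argument at $x=1$ is $2(\nu+1)\lambda$; the apparent mismatch with the argument $(\nu+1)\lambda$ displayed in \eqref{treee} is absorbed into the constant $c_\nu$ and the prefactor, and the two forms agree after rescaling.

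Step three: impose the boundary conditions. The Dirichlet condition at $x=1$ gives a linear relation between $A(\lambda)$ and $B(\lambda)$; the flux condition at $x=0$ is analysed using the small-argument expansions $I_\nu(z)\sim (z/2)^\nu/\Gamma(\nu+1)$ and the two-term behaviour of $K_\nu$, from which the contribution of $I_\nu$ to $(x^\alpha \hat\omega_x)(0,\lambda)$ vanishes while that of $K_\nu$ produces precisely $\hat\theta(\lambda)$ times an explicit $\lambda$-dependent factor. Solving the resulting $2\times 2$ system for $A(\lambda),B(\lambda)$ and evaluating the output $\hat y(\lambda)=B^*\hat z(\lambda)$ — which, by the collocated structure suggested by \eqref{s13}, is the trace at $x=0$ — yields $H(\lambda)=\hat y(\lambda)/\hat\theta(\lambda)$ in the closed form
\begin{equation*}
H(\lambda)=\frac{2((\nu+1)\lambda)^{\nu+1}}{\lambda\,\Gamma(\nu+1)}\Bigl(((\nu+1)\lambda)^{\nu}\frac{K_\nu((\nu+1)\lambda)}{I_\nu((\nu+1)\lambda)}-c_\nu\Bigr),
\end{equation*}
the explicit value of $c_\nu$ arising from the small-$z$ asymptotics of $K_\nu$.

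Step four: to prove boundedness on $C_\gamma=\{\Re\lambda=\gamma\}$, I would use the classical asymptotics $I_\nu(z)\sim e^z/\sqrt{2\pi z}$ and $K_\nu(z)\sim \sqrt{\pi/(2z)}\,e^{-z}$ valid as $|z|\to\infty$ with $|\arg z|<\pi/2$. Since $\gamma>0$ keeps $(\nu+1)\lambda$ strictly inside the right half-plane uniformly in $\Im\lambda$, the quotient $K_\nu/I_\nu$ decays like $(\pi/z)e^{-2z}$, which overwhelms every polynomial factor; on the compact portion $\{|\lambda|\le R\}\cap C_\gamma$ of the line the function is continuous and bounded away from zeros of $I_\nu$ because $\nu\ge 0$ and $I_\nu$ has no zero in the right half-plane. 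Combining the two ranges yields $\sup_{\lambda\in C_\gamma}|H(\lambda)|<\infty$. The main obstacle I foresee is the bookkeeping in Step three: correctly extracting $c_\nu$ requires handling the logarithmic case $\nu=0$ (that is $\alpha=1$) separately and carrying all the $\kappa$-dependent constants through the differentiation of $K_\nu$, after which the boundedness step is a routine asymptotic estimate.
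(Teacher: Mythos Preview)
Your Steps~1--3 (Laplace transform, reduction to the modified Bessel equation via $\hat\omega=x^{(1-\alpha)/2}\Phi$, and determination of the constants from the two boundary conditions) follow exactly the route taken in the paper, so on the derivation of the formula there is no substantive difference.

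Two points of divergence are worth flagging. First, the paper splits the analysis according to whether $\nu=\frac{\alpha-1}{2-\alpha}$ is a positive integer or not, using the basis $\{I_\nu,K_\nu\}$ in the first case and $\{I_\nu,I_{-\nu}\}$ in the second. In the non-integer case the paper in fact concludes that $\hat\omega(0,\lambda)$ blows up and that the transfer function is \emph{not} bounded; so the boundedness assertion in the lemma is only established for $\nu\in\mathbb{N}^*$ (equivalently $\alpha\in\{3/2,5/3,7/4,\dots\}$). Your proposal does not make this case distinction, so you would be claiming more than the paper actually proves. Second, for the boundedness part the paper does not use the large-argument asymptotics of $I_\nu,K_\nu$ at all: it simply writes $|H(\lambda)|\le C_1|\lambda|^{2\nu}\,|K_\nu/I_\nu|+C_2|\lambda|^\nu|c_\nu|$ and asserts this is bounded. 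Your exponential-decay argument for $K_\nu/I_\nu$ is sharper and does control the first summand.

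There is, however, a genuine gap in your Step~4 that the paper also glosses over: the term $-c_\nu$ inside the bracket, once multiplied by the prefactor $\frac{2((\nu+1)\lambda)^{\nu+1}}{\lambda\Gamma(\nu+1)}$, contributes a piece of size $\sim|\lambda|^{\nu}$ that has nothing to do with $K_\nu/I_\nu$ and is therefore untouched by your exponential-decay estimate. On $C_\gamma$ this grows without bound unless $c_\nu=0$ (or $\nu=0$), so your argument as written does not yield $\sup_{C_\gamma}|H|<\infty$. You should either show that $c_\nu$ in fact vanishes, or identify a cancellation between the two bracketed terms, before the boundedness claim can stand.
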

\begin{proof}
Applying the Laplace transform to (\ref{e1}) with respect to time $t$ to get $\hat{\omega}(x,\lambda)$ where $\lambda=\gamma+i\kappa$ and $\gamma>0$. Then
\begin{equation}\label{lap}
\left\{
\begin{array}{lll}
 \lambda^{2}\hat{\omega}(x,\lambda)-(x^{\alpha}\hat{\omega}(x,\lambda))_{x}=0,\quad 0<x<1\\
\\
x^{\alpha}\hat{\omega}_{x}(0,\lambda)=\hat{\theta}(\lambda),\quad  \hat{\omega}(1,\lambda)=0.\\
\end{array}
\right. 
\end{equation}
So we obtain the following Sturm-Liouville problem
$$ x^{2}\hat{\omega}_{xx} +\alpha x\hat{\omega}_{x}-\lambda^{2}x^{2-\alpha}\hat{\omega}=0 , $$
with a solution
\begin{equation}\label{besfu}
\hat{\omega}(x,\lambda)=\left\{
\begin{array}{ll}
\begin{split}
x^{\frac{1-\alpha}{2}}\left[A_1I_{\nu}\left(\frac{2\lambda}{2-\alpha}x^{\frac{2-\alpha}{2}}\right)+B_1K_{\nu}\left(\frac{2\lambda}{2-\alpha}x^{\frac{2-\alpha}{2}}\right)\right]\quad&\text{if} \, \nu \in{\N^{*}},\\
x^{\frac{1-\alpha}{2}}\left[A_2I_{\nu}\left(\frac{2\lambda}{2-\alpha}x^{\frac{2-\alpha}{2}}\right)+B_2I_{-\nu}\left(\frac{2\lambda}{2-\alpha}x^{\frac{2-\alpha}{2}}\right)\right]\quad&\text{if} \, \nu \notin{\N^{*}},\\
\end{split}
\end{array}
\right.
\end{equation}
where $A_i,\;B_i$ are complex numbers, $I_\nu,\;K_\nu$ are Bessel functions of first and second kind and $$\nu= \frac{\alpha-1}{2-\alpha}>0.$$
 \underline{First case}: $\nu\in\N^*$.\\
 We shall note that
$$\nu=\frac{\alpha-1}{2-\alpha}\in\mathbb{N}^*\quad\text{if and only if}\quad\alpha\in[\frac{3}{2},2[.$$ For instance, for $0<x\ll\sqrt{\nu+1}$ the following estimation holds
\begin{equation}
\left\{
\begin{array}{ll}
\begin{split}
I_{\nu}(x) &\simeq\frac{1}{\Gamma(\nu+1)}\left(\frac{x}{2}\right)^{\nu}\\
 K_{\nu}(x) &\simeq\frac{\Gamma(\nu)}{2}\left(\frac{2}{x}\right)^{\nu} .\\
\end{split}
\end{array}
\right.
\end{equation}
The second equation in (\ref{lap}) gives us 
\begin{equation}
\left\{
\begin{array}{ll}
\begin{split}
A_1I_{\nu}(\frac{2\lambda}{2-\alpha})+B_1K_{\nu}\left(\frac{2\lambda}{2-\alpha}\right)&=0\\
B_1c_1&=\hat{\theta}(\lambda)\\
\end{split}
\end{array}
\right.
\end{equation}
where 
\begin{equation}\label{cons}
c_{2}=-\frac{\lambda}{2}  \Gamma\left(\frac{1}{2-\alpha}\right)\left(\frac{2-\alpha}{\lambda}\right)^{\frac{1}{2-\alpha}}.
\end{equation}
Thus, constants $A_1$ and $B_1$ are determined by the following expressions 
\begin{equation*}
\left\{
\begin{array}{ll}
\begin{split}
A_{1}&=\frac{(-1)}{c_{2}}\frac{K_{\nu}\left(\frac{2\lambda}{2-\alpha}\right)}{I_{\nu}\left(\frac{2\lambda}{2-\alpha}\right)}\hat{\theta}(\lambda) \\
 B_{1}&=\frac{\hat{\theta}(\lambda)}{c_{2}}\\
\end{split}
\end{array}
\right.
\end{equation*}
where $c_2$ as in (\ref{cons}). Hence,  the  Bessel functions  $K_\nu$ change its shape for each order $\nu$, then using  numerical calculations, we show that the solution $\omega$ defined in (\ref{besfu}) exists and well-defined in a neighborhood of the origin   so that 
\begin{equation*}
 \hat{\omega}(0,\lambda)\simeq\frac{A_{1}}{\Gamma(\nu+1)}(\frac{\lambda}{2-\alpha})^{\nu}+ B_{1}c_{\nu},\quad  \left|c_{\nu}\right|<\infty,
\end{equation*} where $c_{\nu}=\lim\limits_{x \rightarrow 0} x^{\frac{1-\alpha}{2}} K_{\nu}\left(\frac{2\lambda}{2-\alpha}x^{\frac{2-\alpha}{2}}\right)$ is a complex number. By the relation 
\begin{equation}\label{eqz}
\hat{\omega}(0,\lambda)=H(\lambda)\hat{\theta}(\lambda)
\end{equation}
 we can deduce the transfer function which is given by
\begin{equation}
H(\lambda)=\frac{2((\nu+1)\lambda)^{\nu+1}}{ \Gamma(\nu+1)}\left(((\nu+1)\lambda)^{\nu}\frac{K_{\nu}((\nu+1)\lambda)}{I_{\nu}((\nu+1)\lambda)}-c_{\nu}\right) 
\end{equation}

Let $\lambda\in {\C}\backslash\mathbb{R}_{-}$, $\lambda=\gamma+i\kappa ,\ \gamma> 0$.
 The the principal determination of the logarithm of $\lambda$ is defined as follows  
\begin{equation*}
 \log(\lambda)=\ln\vert\lambda\vert + i\arg(\lambda),\quad -\frac{\pi}{2}<\arg(\lambda)\leq\frac{\pi}{2},
\end{equation*}
 
Since 
\begin{equation*}
\begin{split}
\lambda \mapsto \left((\nu+1)\lambda\right)^{\nu+1}&=e^{(\nu+1) Log((\nu+1)\lambda)}\\
&=\omega e^{(\nu+1)Log(\lambda)},\quad\omega=(\nu+1)^{\nu+1},
\end{split}
\end{equation*} we get
\begin{equation*}
 \left|H(\lambda)\right| \leq\frac{2\omega^{2}}{\nu+1}\left|\lambda^{2\nu}\right| \left|\frac{K_{\nu}((\nu+1)\lambda)}{I_{\nu}((\nu+1)\lambda)}\right|+ 2\omega\left|\lambda^{\nu}\right|\left|c_{\nu}\right|. 
\end{equation*}
 where $\mid c_{\nu}\mid $ is finite  and  
\begin{equation*}
 L=\left|\frac{K_{\nu}((\nu+1)\lambda)}{I_{\nu}((\nu+1)\lambda)}\right|\neq0\quad\text{and finite}.
\end{equation*}
 
This shows that $H(\lambda)$ is bounded on ${\C}\backslash\mathbb{R}_{-}$.

\underline{Second case}: $\nu\notin\mathbb{N}^*$.\\
Since $\nu\notin\mathbb{N}^*$, we have $K_{\nu}(x)=I_{-\nu}(x)$. As a result, the solution of (\ref{lap}) becomes
\begin{equation}
\hat{\omega}(x,\lambda)= x^{\frac{1-\alpha}{2}}\left[A_2I_{\nu}\left(\frac{2\lambda}{2-\alpha}x^{\frac{2-\alpha}{2}}\right)+B_2I_{-\nu}\left(\frac{2\lambda}{2-\alpha}x^{\frac{2-\alpha}{2}}\right)\right].
\end{equation}
The boundary conditions  $x^{\alpha}\hat{\omega}_{x}(0,\lambda)=\hat{\theta}(\lambda)$ and $\hat{\omega}(1,\lambda)=0$ allows  us to determine
\begin{equation*}\label{nht}
\left\{
\begin{array}{ll}
\begin{split}
A_{2} &=\frac{I_{\nu}(\frac{2\lambda}{2-\alpha})}{(\alpha-1)I_{\nu}(\frac{2\lambda}{2-\alpha})}\left(\frac{\lambda}{2-\alpha}\right)^{\frac{\alpha-1}{2-\alpha}}\hat{\theta}(\lambda)  \\
B_{2}&=\frac{1}{(1-\alpha)}\left(\frac{\lambda}{2-\alpha}\right)^{\frac{\alpha-1}{2-\alpha}}\hat{\theta}(\lambda).\\
\end{split}
\end{array}
\right.
\end{equation*}
Hence
$$ \hat{\omega}(x,\lambda)\simeq\left(\frac{A_{2}}{\Gamma(\nu+1)}\left((\nu+1)\lambda\right)^{\nu}+ \frac{B_{2}}{\Gamma(1-\nu)}\left((\nu+1)\lambda\right)^{-\nu}x^{\frac{-\nu}{\nu+1}}\right).$$
As $\hat{\omega}(0,\lambda)\rightarrow \infty$, then the transfer function is not bounded.
 
Using relation (\ref{eqz}), we conclude that the transfer function is bounded on $C_{\gamma}$ if $\nu\in\N^*$.
\end{proof}

\begin{figure}
\floatbox[{\capbeside\thisfloatsetup{capbesideposition={left,top},capbesidewidth=9cm}}]{figure}[\FBwidth]
{\caption{value of $\left|c_\nu\right|$ with $\arg(\lambda)=\frac{\pi}{3}$}\label{fig:test}}
{\includegraphics[width=7cm]{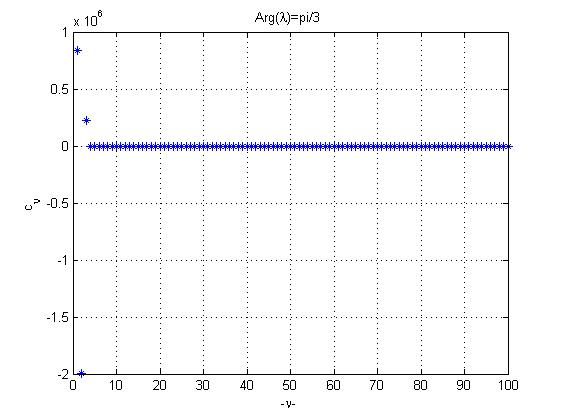}}

\floatbox[{\capbeside\thisfloatsetup{capbesideposition={left,top},capbesidewidth=9cm}}]{figure}[\FBwidth]
{\caption{value of $\left|c_\nu\right|$ with $\arg(\lambda)=\frac{\pi}{4}$}\label{fig:test}}
{\includegraphics[width=7cm]{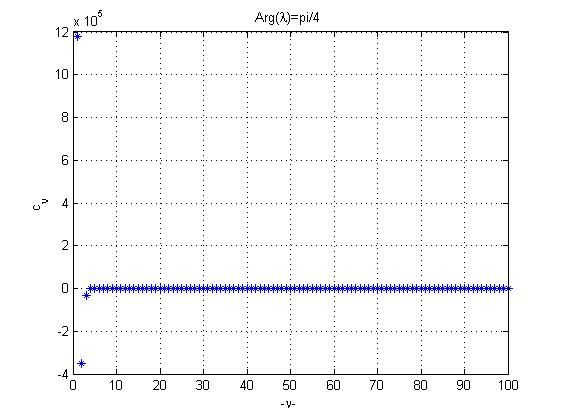}}

\floatbox[{\capbeside\thisfloatsetup{capbesideposition={left,top},capbesidewidth=9cm}}]{figure}[\FBwidth]
{\caption{value of $\left|c_\nu\right|$ with $\arg(\lambda)=\frac{\pi}{6}$}\label{fig:test}}
{\includegraphics[width=7cm]{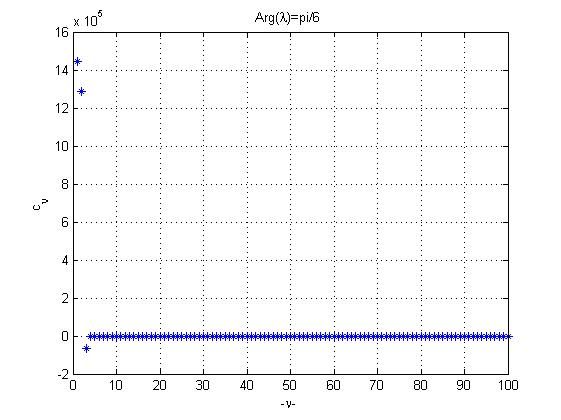}}

\floatbox[{\capbeside\thisfloatsetup{capbesideposition={left,top},capbesidewidth=9cm}}]{figure}[\FBwidth]
{\caption{value of $\left|c_\nu\right|$ with $\arg(\lambda)=\frac{\pi}{2}$}\label{fig:test}}
{\includegraphics[width=7cm]{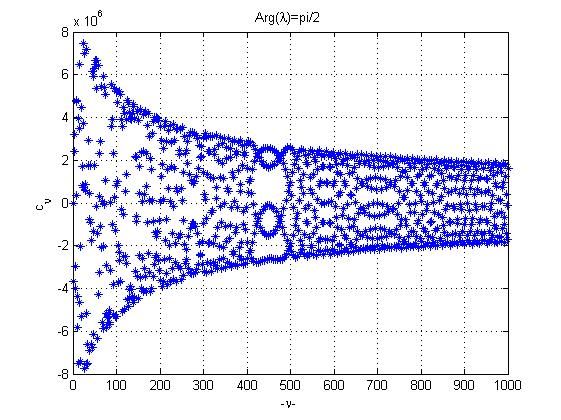}}
\end{figure}

%
\newpage

\end{document}